\numberwithin{equation}{section}
\numberwithin{figure}{section}
\theoremstyle{plain}
\newtheorem{thm}{\protect\theoremname}
  \theoremstyle{plain}
  \newtheorem{lem}[thm]{\protect\lemmaname}
  \theoremstyle{remark}
  \newtheorem{rem}[thm]{\protect\remarkname}
  \providecommand{\lemmaname}{Lemma}
\providecommand{\theoremname}{Theorem}
\providecommand{\lemmaname}{Lemma}
\providecommand{\theoremname}{Theorem}
  \providecommand{\lemmaname}{Lemma}
  \providecommand{\remarkname}{Remark}
\providecommand{\theoremname}{Theorem}
\begin{document}

\title{singularity of the generator subalgebra in mixed $q$-Gaussian algebras}

\author{Simeng Wang}

\address{Saarland University, Fachbereich Mathematik, Postfach 151150, 66041
Saarbrücken, Germany}

\email{wang@math.uni-sb.de}
\begin{abstract}
We prove that for the mixed $q$-Gaussian algebra $\Gamma_{Q}(H_{\mathbb{R}})$
associated to a real Hilbert space $H_{\mathbb{R}}$ and a real symmetric
matrix $Q=(q_{ij})$ with $\sup|q_{ij}|<1$, the generator subalgebra
is singular.
\end{abstract}

\maketitle
In this note we discuss the generator masas in mixed $q$-Gaussian
algebras. In the early 90's, motivated by mathematical physics and
quantum probability, Bo\.{z}ejko and Speicher introduced the von Neumann
algebra generated by $q$-deformed Gaussian variables \cite{bozejkospeicher91q}.
Since then, this family of von Neumann algebras as well as several
generalizations has attracted a lot of attention. Recently, the generator
subalgebras in these $q$-deformed von Neumann algebras are fruitfully
investigated in \cite{ricard05qfactor,wen17qsingular,skalskiwang2016mixed,bikrammukherjee16qawfactor,bikrammukherjee17qawfactordim3,caspersskalskiwasilewski17qmasaw}.
In this note we will be interested in the case of mixed $q$-Gaussian
algebras introduced in \cite{bozejkospeicher94yangbaxter}, and we
will prove that the associated generator subalgebras are singular.
Our methods are adapted from \cite{skalskiwang2016mixed,wen17qsingular}.

Before the main results let us fix some notation. Let $N\in\mathbb{N}\cup\{\infty\}$,
let $Q=(q_{ij})_{i,j=1}^{N}$ be a symmetric matrix with $q=\max_{i,j}|q_{ij}|<1$,
and let $H_{\mathbb{R}}$ be a real Hilbert space with orthonormal
basis $e_{1},\ldots,e_{N}$. Write $H=H_{\mathbb{R}}+\mathrm{i}H_{\mathbb{R}}$
to be the complexification of $H_{\mathbb{R}}$. Let $\mathcal{F}_{Q}(H)$
be the Fock space associated to the Yang-Baxter operator 
\[
T:H\otimes H\to H\otimes H,\quad e_{i}\otimes e_{j}\mapsto q_{ij}e_{j}\otimes e_{i}
\]
constructed in \cite{bozejkospeicher94yangbaxter}. Let $\Omega$
be the vacuum vector. The left and right creation operators $l_{i}$
are defined by the formulas 
\[
l_{i}\xi=e_{i}\otimes\xi,\quad r_{i}\xi=\xi\otimes e_{i},\quad\xi\in\mathcal{F}_{Q}(H),
\]
and their adjoints $l^{*},r^{*}$ are called the left and right annihilation
operators respectively. We consider the associated mixed $q$-Gaussian
algebra $\Gamma_{Q}(H_{\mathbb{R}})$ generated by the self-adjoint
variables $s_{j}=l_{j}^{*}+l_{j}$. Denote the \emph{Wick product}
map $W:\Gamma_{Q}(H_{\mathbb{R}})\Omega\to\Gamma_{Q}(H_{\mathbb{R}})$
such that $W(\xi)\Omega=\xi$, and the right Wick product map $W_{r}$
on the commutant similarly. Take $\xi_{0}\in H_{\mathbb{R}}$ and
let $M_{\xi_{0}}$ be the von Neumann subalgebra generated by $W(\xi_{0})$
in $\Gamma_{Q}(H_{\mathbb{R}})$. Note that $M_{\xi_{0}}$ is a diffuse
abelian subalgebra. We refer to \cite{bozejkospeicher94yangbaxter,skalskiwang2016mixed}
for any unexplained notation and terminology on the mixed $q$-Gaussian
algebra $\Gamma_{Q}(H_{\mathbb{R}})$. 

Recall that a von Neumann subalgebra $A\subset M$ is called \textit{singular},
if the normalizer $\{u\in\mathcal{U}(M):uAu^{*}=A\}$ is contained
in $A$. For a finite von Neumann algebra $(M,\tau)$, we denote by
$L^{2}(M)$ the completion of $M$ with respect to the norm $\|x\|_{2}^{2}\coloneqq\tau(x^{*}x)$
for any $x\in M$. A subalgebra $A$ is called\textit{ mixing} in
$M$ if for any sequence of unitaries $\{v_{n}\}$ in $A$ which converges
to 0 weakly, we have 
\[
\lim_{n\to\infty}\|\mathbb{E}_{A}(xv_{n}y)-\mathbb{E}_{A}(x)v_{n}\mathbb{E}_{A}(y)\|_{2}=0,\forall x,y\in M,
\]
where $\mathbb{E}_{A}$ stands for the conditional expectation onto
$A$. It is easy to see that for diffuse subalgebras, mixing implies
singularity. We refer to \cite{sinclairsmith08masabook} for more
details on the theory of finite von Neumann algebras. Our results
will be based on the following well-known property (see for example
\cite[Theorem 11.4.1]{sinclairsmith08masabook} and \cite[Proposition 1]{wen17qsingular}.
\begin{lem}
\label{lem:lem}Let $M$ be a finite von Neumann algebra and $A\subset M$
a diffuse subalgebra. Assume that $Y\subset M$ is a subset whose
linear span is $\|\cdot\|_{2}$-dense in $L^{2}(M)$ and $\{v_{n}\}\subset A$
is an orthonormal basis for $L^{2}(A)$. If 
\[
\sum_{n}\|\mathbb{E}_{A}(xv_{n}y)-\mathbb{E}_{A}(x)v_{n}\mathbb{E}_{A}(y)\|_{2}^{2}<\infty,
\]
for all $x,y\in Y$, then $A$ is mixing in $M$. In particular, $A$
is singular in $M$. 
\end{lem}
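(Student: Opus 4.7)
My strategy is to reformulate the hypothesis as a Hilbert--Schmidt property of a canonical family of operators on $L^{2}(A)$, and then to invoke the elementary fact that compact operators send weakly null sequences to norm null sequences.

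For each pair $x, y \in M$, introduce the bounded linear operator $T_{x,y} \colon L^{2}(A) \to L^{2}(A)$ obtained by extending
\[
T_{x,y}(a) := \mathbb{E}_{A}(xay) - \mathbb{E}_{A}(x)\, a\, \mathbb{E}_{A}(y), \qquad a \in A,
\]
to $L^{2}(A)$ by continuity. That $T_{x,y}$ is well-defined and bounded with $\|T_{x,y}\|_{\mathrm{op}} \leq 2\|x\|_\infty\|y\|_\infty$ follows from $\|ab\|_{2} \leq \|a\|_\infty\|b\|_{2}$ and the $L^{2}$-contractivity of $\mathbb{E}_{A}$. The hypothesis says precisely that $\sum_{n}\|T_{x,y}(v_{n})\|_{2}^{2} < \infty$ for $x,y \in Y$, i.e.\ $T_{x,y}$ is Hilbert--Schmidt (hence compact) for such $x,y$, and by bilinearity in $(x,y)$ the same holds for $x,y$ in the linear span of $Y$.

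A sequence of unitaries $\{w_{k}\} \subset A$ weakly null in $M$ is also weakly null in $L^{2}(A)$, because testing $\langle w_{k}, z\rangle_{L^{2}(A)} = \tau(z^{*} w_{k})$ against $z \in A$ uses only that $\tau(z^{*} \cdot) \in M_{*}$, and the extension to $z \in L^{2}(A)$ follows from $\|w_{k}\|_{2} \leq 1$ and $L^{2}$-density of $A$. Compactness of $T_{x,y}$ then yields $\|T_{x,y}(w_{k})\|_{2} \to 0$ for $x,y \in \operatorname{span}(Y)$. For general $x, y \in M$ one uses the one-sided estimates
\[
\|T_{x - x', y}(w_{k})\|_{2} \leq 2\|x - x'\|_{2}\|y\|_\infty, \qquad \|T_{x, y - y'}(w_{k})\|_{2} \leq 2\|x\|_\infty\|y - y'\|_{2},
\]
which follow from $\|ab\|_{2} \leq \min(\|a\|_\infty\|b\|_{2}, \|a\|_{2}\|b\|_\infty)$, $\|w_{k}\|_\infty = 1$, and $L^{2}$-contractivity of $\mathbb{E}_{A}$. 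Combined with the $\|\cdot\|_{2}$-density of $\operatorname{span}(Y)$ in $L^{2}(M)$, approximating first $x$ by $x' \in \operatorname{span}(Y)$ with $y \in \operatorname{span}(Y)$ fixed, and then $y$ by $y' \in \operatorname{span}(Y)$ with $x \in M$ fixed, gives
\[
\lim_{k} \|\mathbb{E}_{A}(xw_{k} y) - \mathbb{E}_{A}(x)w_{k}\mathbb{E}_{A}(y)\|_{2} = 0
\]
for all $x, y \in M$, which is the mixing condition.

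Finally, mixing implies singularity of $A$ via a well-known argument for diffuse subalgebras: if $u \in \mathcal{U}(M)$ normalizes $A$, apply the mixing identity with $x=u$, $y=u^{*}$ to a weakly null sequence of unitaries $v_{n} \in A$ (available from diffuseness). Since $u v_{n} u^{*} \in A$, the outer conditional expectation disappears; multiplication on the two sides by the isometries $u^{*}$ and $u$ then gives $\|v_{n} - u^{*}\mathbb{E}_{A}(u)v_{n}\mathbb{E}_{A}(u^{*})u\|_{2} \to 0$, which, since $\|v_{n}\|_{2} = 1$, forces $\|\mathbb{E}_{A}(u)\|_\infty = 1$ and ultimately $u \in A$ by the standard analysis of equality in $\mathbb{E}_{A}(u^{*} u) \geq \mathbb{E}_{A}(u)^{*}\mathbb{E}_{A}(u)$. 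The delicate step in the whole proof is the middle $L^{2}$-approximation: $\|\cdot\|_{2}$-density provides no $L^\infty$-control over approximants, so a naive Kaplansky-type argument would require enlarging $Y$ to a $*$-subalgebra, which the hypothesis does not assume; what makes this unnecessary is the one-sidedness of the Lipschitz estimates, which only ever demand $L^\infty$-control on a single coordinate at a time.
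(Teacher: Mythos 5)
The paper does not actually prove this lemma: it is quoted as a known fact, with pointers to \cite[Theorem 11.4.1]{sinclairsmith08masabook} and \cite[Proposition 1]{wen17qsingular}, and the implication ``mixing implies singular for diffuse subalgebras'' is likewise taken for granted in the surrounding text. Your argument is essentially the standard proof from those sources --- recasting the hypothesis as the Hilbert--Schmidt property of $T_{x,y}$ on $L^{2}(A)$, transferring weak nullity of unitaries from $M$ to $L^{2}(A)$, and using the one-sided Lipschitz estimates to pass from $Y$ to $M$ without any Kaplansky-type density --- and the mixing part is complete and correct. The only imprecision is at the very end: from $\|v_{n}-u^{*}\mathbb{E}_{A}(u)v_{n}\mathbb{E}_{A}(u^{*})u\|_{2}\to0$ and $\|v_{n}\|_{2}=1$ you get $1=\lim_{n}\|\mathbb{E}_{A}(u)v_{n}\mathbb{E}_{A}(u)^{*}\|_{2}\leq\|\mathbb{E}_{A}(u)\|_{\infty}\,\|\mathbb{E}_{A}(u)\|_{2}$, and since both factors are at most $1$ this forces $\|\mathbb{E}_{A}(u)\|_{2}=1$, not merely $\|\mathbb{E}_{A}(u)\|_{\infty}=1$; the latter alone would not yield $\mathbb{E}_{A}(u)^{*}\mathbb{E}_{A}(u)=1$, whereas $\|\mathbb{E}_{A}(u)\|_{2}=\|u\|_{2}$ together with the fact that $\mathbb{E}_{A}$ is the orthogonal projection of $L^{2}(M)$ onto $L^{2}(A)$ gives $u=\mathbb{E}_{A}(u)\in A$ at once. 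With that correction the proof is complete.
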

The following is our main result.
\begin{thm}
Let $v_{n}=\|W(\xi_{0}^{\otimes n})\|_{2}^{-1}W(\xi_{0}^{\otimes n})$,
$n\in\mathbb{N}$. Then for any words $x=W(\xi_{1}\otimes\cdots\otimes\xi_{m})$
and $y=W(\eta_{1}\otimes\cdots\otimes\eta_{k})$ with $\xi_{i},\eta_{j}\in H_{\mathbb{R}}$,
we have
\[
\sum_{n}\|\mathbb{E}_{M_{\xi_{0}}}(xv_{n}y)-\mathbb{E}_{M_{\xi_{0}}}(x)v_{n}\mathbb{E}_{M_{\xi_{0}}}(y)\|_{2}^{2}<\infty.
\]
Consequently, $M_{\xi_{0}}$ is mixing and singular in $\Gamma_{Q}(H_{\mathbb{R}})$.
\end{thm}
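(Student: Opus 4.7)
The plan is to apply Lemma~\ref{lem:lem} with the family $\{v_n\}_{n\ge 0}$ from the statement and with $Y$ the set of all Wick words $W(\xi_1\otimes\cdots\otimes\xi_m)$, $\xi_i\in H_{\mathbb{R}}$, $m\ge 0$. Since $Y$ has $\|\cdot\|_2$-dense linear span in $L^2(\Gamma_Q(H_{\mathbb{R}}))$, and since the vectors $W(\xi_0^{\otimes n})\Omega$ lie in mutually distinct Fock levels and hence are pairwise orthogonal (their closed linear span being exactly $L^2(M_{\xi_0})$ as $M_{\xi_0}$ is generated by the single self-adjoint element $W(\xi_0)$), the $v_n$'s form an orthonormal basis of $L^2(M_{\xi_0})$, and $\mathbb{E}_{M_{\xi_0}}$ on $L^2(\Gamma_Q(H_{\mathbb{R}}))$ coincides with the orthogonal projection onto $\overline{\mathrm{span}}\{W(\xi_0^{\otimes j})\Omega:j\ge 0\}$.

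The technical engine of the proof is the explicit multiplication formula for Wick words in the mixed $q$-Fock space, as developed in \cite{bozejkospeicher94yangbaxter} and exploited in \cite{skalskiwang2016mixed,wen17qsingular}. Writing each Wick word as an alternating product in the creation and annihilation operators $l_i$, $l_i^*$ and repeatedly applying the braided commutation relations induced by the Yang--Baxter operator $T$, the triple product
\[
xv_{n}y \;=\; W(\xi_1\otimes\cdots\otimes\xi_m)\,W(\xi_0^{\otimes n})\,W(\eta_1\otimes\cdots\otimes\eta_k)
\]
expands into a sum of terms indexed by ``partial contraction diagrams'' $\pi$ on the three blocks of letters; each $\pi$ matches some subset of letters into pairs (intra-block or cross-block) and contributes a Wick vector formed from the unmatched letters, with coefficient equal to a product of inner products corresponding to the matched pairs, multiplied by a monomial in the $q_{ij}$'s recording the Yang--Baxter twists used to bring matched letters adjacent.

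The next step is to split this expansion according to whether any letter of the middle block $\xi_0^{\otimes n}$ is cross-matched with a letter of $x$ or of $y$. The diagrams without such cross-matchings (so that the middle block survives intact as $\xi_0^{\otimes n}$ in the resulting Wick vector) are precisely those that, after projection onto $L^2(M_{\xi_0})$, reassemble into $\mathbb{E}_{M_{\xi_0}}(x)\,v_{n}\,\mathbb{E}_{M_{\xi_0}}(y)$, since the intra-$x$ and intra-$y$ contractions, projected onto $L^2(M_{\xi_0})$, reconstruct $\mathbb{E}_{M_{\xi_0}}(x)$ and $\mathbb{E}_{M_{\xi_0}}(y)$ respectively. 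Hence the difference $\mathbb{E}_{M_{\xi_0}}(xv_{n}y)-\mathbb{E}_{M_{\xi_0}}(x)v_{n}\mathbb{E}_{M_{\xi_0}}(y)$ is the $L^2(M_{\xi_0})$-projection of the sum over the remaining ``mixed'' diagrams, in which at least one middle $\xi_0$ is paired with a letter of $x$ or $y$.

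The principal obstacle is the quantitative estimate of this mixed sum. For each mixed diagram, the decisive geometric fact---central also to the arguments in \cite{skalskiwang2016mixed,wen17qsingular}---is that pairing the $j$-th $\xi_0$ of the middle block with a letter of $y$ forces the corresponding Yang--Baxter twist to permute this $\xi_0$ past the $n-j$ subsequent middle $\xi_0$'s, contributing a product of $q_{ij}$ factors whose modulus is at most $q^{n-j}$; the symmetric bound $q^{j-1}$ holds for pairings with $x$, and these decays compound for simultaneous cross-pairings. Combining this geometric decay with (i) the trivial bound $|\langle\xi_i,\xi_0\rangle|\le\|\xi_i\|\|\xi_0\|$ on individual inner-product coefficients, (ii) the polynomial-in-$n$ count of at most $O(n^{m+k})$ mixed diagrams, and (iii) the known polynomial growth in $j$ of $\|W(\xi_0^{\otimes j})\Omega\|_2$, one expects an estimate of the form
\[
\|\mathbb{E}_{M_{\xi_0}}(xv_{n}y)-\mathbb{E}_{M_{\xi_0}}(x)v_{n}\mathbb{E}_{M_{\xi_0}}(y)\|_{2}^{2}\;\le\;C_{x,y}\,n^{2(m+k)}\,q^{\alpha n}
\]
for some $\alpha>0$, which is plainly summable in $n$. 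Feeding this into Lemma~\ref{lem:lem} then yields the mixing, and hence the singularity, of $M_{\xi_0}$ in $\Gamma_Q(H_{\mathbb{R}})$.
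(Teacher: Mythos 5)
There is a genuine gap, and it lies exactly where the proposal is vaguest: the quantitative estimate of the ``mixed'' diagrams and the identification of the main term. Your decomposition --- diagrams with no cross-matching between the middle block and $x,y$ versus the rest --- does not align with the subtraction of $\mathbb{E}_{M_{\xi_{0}}}(x)v_{n}\mathbb{E}_{M_{\xi_{0}}}(y)$. In one direction, a diagram pairing a letter of $x$ directly with a letter of $y$ (possible for $q\neq0$ at the cost of crossing the whole middle block) has no middle cross-matching, yet is not a term of $\mathbb{E}_{M_{\xi_{0}}}(x)v_{n}\mathbb{E}_{M_{\xi_{0}}}(y)$; that happens to be harmless since it carries $q^{n}$. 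In the other direction the problem is fatal: take the diagram in which the \emph{last} middle $\xi_{0}$ is contracted with $\eta_{1}$ (so $j=n$ in your notation) and nothing else is matched. Your own bound for it is $q^{n-j}=1$, and indeed this term is genuinely $O(1)$: it produces $\langle\xi_{0},\eta_{1}\rangle\,\xi_{1}\otimes\cdots\otimes\xi_{m}\otimes\xi_{0}^{\otimes(n-1)}\otimes\eta_{2}\otimes\cdots\otimes\eta_{k}$, which survives the projection whenever the remaining letters have nonzero $\xi_{0}$-components. Such non-decaying terms sit in your ``mixed'' class, so the claimed bound $C_{x,y}n^{2(m+k)}q^{\alpha n}$ simply does not follow from ingredients (i)--(iii); these terms are precisely the ones that must \emph{cancel} against $\mathbb{E}_{M_{\xi_{0}}}(x)v_{n}\mathbb{E}_{M_{\xi_{0}}}(y)$, which your classification has placed on the wrong side of the ledger.

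The paper avoids both problems by a different mechanism. It first reduces, by the bimodule property of $\mathbb{E}_{M_{\xi_{0}}}$, to the case $\mathbb{E}_{M_{\xi_{0}}}(x)=\mathbb{E}_{M_{\xi_{0}}}(y)=0$, so the main term is zero and one only has to bound $\|\mathbb{E}_{M_{\xi_{0}}}(xv_{n}y)\|_{2}$; after this reduction the letters of $x$ and $y$ responsible for nonvanishing can be taken orthogonal to $\xi_{0}$, which kills the adjacent middle--outer contractions that defeat your estimate. It then rewrites $y$ as a \emph{right} Wick operator acting on the vector $\xi_{0}^{\otimes n}$, so that the only surviving interactions are between left annihilations coming from $x$ and right creations coming from $y$; these necessarily traverse the entire middle word, and the decay is extracted from the commutator estimate $\|[l_{i}^{*},r_{j}]|_{H^{\otimes n}}\|\leq q^{n}$ via the iterated-commutator Lemma \ref{lem:conv general case}, yielding $\|\zeta_{n}\|\leq Cq^{n}\|\xi_{0}^{\otimes n}\|$. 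The uniform $q^{n}$ thus comes from left--right (i.e.\ $x$--$y$) pairings across the whole block, not from pairings of middle letters with $x$ or $y$, which is the mechanism your proposal relies on. To repair your argument you would need both the zero-expectation reduction and a correct accounting of which diagrams constitute the subtracted term; as written, the estimate fails.
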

It is a standard argument to see from the above theorem that $M_{\xi_{0}}$
is maximal abelian in $\Gamma_{Q}(H_{\mathbb{R}})$ and hence $\Gamma_{Q}(H_{\mathbb{R}})$
is a II$_{1}$ factor if $\dim H_{\mathbb{R}}\geq2$. As a result
we recover the main theorem in \cite{skalskiwang2016mixed}.

Before the proof of the above theorem, let us recall the following
estimate given in \cite[proof of Lemma 1]{skalskiwang2016mixed}.
\begin{lem}
\label{lem:conv general case}Let $(H_{n})_{n\geq1}$ be a sequence
of Hilbert spaces and write $H=\oplus_{n\geq1}H_{n}$. Let $r,s\in\mathbb{N}$
and let $(a_{i})_{1\leq i\leq r}$, $(b_{j})_{1\leq j\leq s}$ be
two families of operators on $H$ which send each $H_{n}$ into $H_{n+1}$
or $H_{n-1}$, such that there exists $0<q<1$ with 
\[
\|[a_{i},b_{j}]|_{H_{n}}\|\leq q^{n},\;\;\;n\in\mathbb{N}.
\]
Assume that $K_{n}\subset H_{n}$ is a finite-dimensional Hilbert
subspace for each $n\geq1$ such that for $K=\oplus_{n}K_{n}$ we
have 
\[
a_{i}(K)\subset K,\quad1\leq i\leq r-1,\quad\text{and }a_{r}|_{K}=0.
\]
Then for any $n\geq1$, there is a constant $C>0$, independent of
$n$, such that 
\[
\|(a_{r}\cdots a_{1}b_{1}\cdots b_{s})|_{K_{n}}\|\leq Cq^{n}.
\]
\end{lem}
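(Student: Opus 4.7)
The plan is to prove the estimate by induction on $r$, the number of $a$-operators. The key tool is the ``push $a_1$ to the right'' decomposition obtained by iterating $a_1 b_j = b_j a_1 + [a_1, b_j]$:
\[
a_1 b_1 \cdots b_s = b_1 \cdots b_s \, a_1 + \sum_{j=1}^{s} b_1 \cdots b_{j-1} [a_1, b_j] b_{j+1} \cdots b_s.
\]
Multiplying by $a_r \cdots a_2$ on the left rewrites $a_r \cdots a_1 b_1 \cdots b_s$ as a leading term in which $a_1$ acts first on the input, plus $s$ commutator terms, each of which will carry a factor of order $q^n$ from the hypothesis on $[a_i,b_j]$.

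For the base case $r=1$ we have $a_1=a_r$ with $a_1|_K=0$, so the leading term $b_1 \cdots b_s \, a_1|_{K_n}$ vanishes. In each commutator term, pre-applying $b_{j+1}\cdots b_s$ to $\xi \in K_n$ gives a vector lying in some $H_m$ with $|m-n|\leq s$ and norm at most $\prod_i\|b_i\|\cdot\|\xi\|$. The hypothesis then yields $\|[a_1,b_j]|_{H_m}\|\leq q^m\leq q^{-s}q^n$, and the remaining factor $b_1\cdots b_{j-1}$ is norm-bounded, so summing over $j$ produces the required $O(q^n)$ bound.

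For the inductive step, assume the lemma for $r-1$ and apply the same decomposition. The commutator terms are estimated exactly as in the base case, now also absorbing $\|a_r\cdots a_2\|$ into the constant. For the leading term $(a_r\cdots a_2)\, b_1 \cdots b_s \, a_1$ applied to $\xi\in K_n$, the hypothesis $a_1(K)\subset K$ gives $a_1\xi\in K_m$ for some $m\in\{n-1,n+1\}$ with $\|a_1\xi\|\leq\|a_1\|\|\xi\|$. The remaining string $(a_r\cdots a_2)\, b_1 \cdots b_s$ satisfies the hypotheses of the lemma with $r-1$ $a$-operators: $a_2,\ldots,a_{r-1}$ still preserve $K$, and the new ``last'' operator $a_r$ still kills $K$. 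The inductive hypothesis therefore bounds its norm on $K_m$ by $Cq^m\leq Cq^{-1}q^n$, so the leading term is $O(q^n)$ as well and the induction closes.

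The main technical point is the careful bookkeeping of grade shifts. Vectors traverse various graded pieces $H_m$ with $|m-n|$ controlled by $r+s$, and one must compare the commutator bound $q^m$ to the target $q^n$ by paying a factor of at most $q^{-(r+s)}$. The resulting constant depends on $r$, $s$ and the operator norms $\|a_i\|,\|b_j\|$, but is independent of $n$, as required; small-$n$ edge cases (where a shifted index becomes non-positive) are absorbed into $C$ by enlarging it.
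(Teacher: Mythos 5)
Your proof is correct and follows essentially the same route as the paper: the paper iterates the identity $a_{i}b_{1}\cdots b_{s}=b_{1}\cdots b_{s}a_{i}+\sum_{j}b_{1}\cdots b_{j-1}[a_{i},b_{j}]b_{j+1}\cdots b_{s}$ over all $i$ at once, obtaining a telescoping sum whose leading term $b_{1}\cdots b_{s}a_{r}\cdots a_{1}\xi$ vanishes because $a_{1},\ldots,a_{r-1}$ preserve $K$ and $a_{r}$ kills it, while you package that same iteration as an induction on $r$. The one-step commutator decomposition, the use of $K$-invariance to kill the leading term, and the grade-shift bookkeeping (paying a factor of at most $q^{-(r+s)}$, with small-$n$ cases absorbed into the constant) are identical in both arguments.
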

\begin{proof}
For each $i$ we may write 
\[
a_{i}b_{1}\cdots b_{s}\xi-b_{1}\cdots b_{s}a_{i}\xi=\sum_{j=1}^{s}b_{1}\cdots b_{j-1}[a_{i},b_{j}]|_{H_{m(j,n)}}b_{j+1}\cdots b_{s}\xi,\quad\xi\in K_{n},
\]
where $m(j,n)$ is an integer greater than $n-s$. Iterating this
formula we obtain 
\begin{align*}
a_{r}\cdots a_{1}b_{1}\cdots b_{s}\xi & =b_{1}\cdots b_{s}a_{r}\cdots a_{1}\xi+\sum_{i=1}^{r}(a_{r}\cdots a_{i}b_{1}\cdots b_{s}a_{i-1}\cdots a_{1}\xi-a_{r}\cdots a_{i+1}b_{1}\cdots b_{s}a_{i}\cdots a_{1}\xi)\\
 & =b_{1}\cdots b_{s}a_{r}\cdots a_{1}\xi+\sum_{i=1}^{r}a_{r}\cdots a_{i+1}\left(\sum_{j=1}^{s}b_{1}\cdots b_{j-1}[a_{i},b_{j}]|_{H_{m'(i,j,n)}}b_{j+1}\cdots b_{s}\right)a_{i-1}\cdots a_{1}\xi,
\end{align*}
where $\xi\in K_{n}$ and for each $i,j,n$ the integer $m'(i,j,n)$
is greater that $n-s-r$. Note that $a_{r}\cdots a_{1}\xi=0$ by the
assumption on $a_{i}$. Since the sum above is independent of $n$,
the lemma is established. 
\end{proof}
Now we may prove our main result.
\begin{proof}[Proof of Theorem 2]
Note that if $x\in M_{\xi_{0}}$ or $y\in M_{\xi_{0}}$, then the
summation is trivially $0$. So without loss of generality we assume
that $\{x\Omega,y\Omega\}\bot\mathcal{F}_{Q}(\mathbb{R}\xi_{0})$.
In this case we have $\mathbb{E}_{M_{\xi_{0}}}(x)=\mathbb{E}_{M_{\xi_{0}}}(y)=0$.
By Lemma \ref{lem:lem}, it is enough to show that $\sum_{n}\|\mathbb{E}_{M_{\xi_{0}}}(xW(\xi_{0}^{\otimes n})y)\Omega\|_{\mathcal{F}_{Q}(H_{\mathbb{R}})}^{2}/\|\xi_{0}^{\otimes n}\|_{\mathcal{F}_{Q}(H_{\mathbb{R}})}^{2}<\infty$.
By the second quantization we know that
\begin{align*}
\mathbb{E}_{M_{\xi_{0}}}(xW(\xi_{0}^{\otimes n})y)\Omega & =P_{\mathcal{F}_{Q}(\mathbb{R}\xi_{0})}(W(\xi_{1}\otimes\cdots\otimes\xi_{m})W(\xi_{0}^{\otimes n})\eta_{1}\otimes\cdots\otimes\eta_{k})\\
 & =P_{\mathcal{F}_{Q}(\mathbb{R}\xi_{0})}(W(\xi_{1}\otimes\cdots\otimes\xi_{m})W_{r}(\eta_{1}\otimes\cdots\otimes\eta_{k})\xi_{0}^{\otimes n}),
\end{align*}
where $P_{\mathcal{F}_{Q}(\mathbb{R}\xi_{0})}$ is the orthogonal
projection from $\mathcal{F}_{Q}(H_{\mathbb{R}})$ to $\mathcal{F}_{Q}(\mathbb{R}\xi_{0})$.
So by the Wick formula in \cite[Theorem 1]{krolak00wickyangbaxter},
it suffices to prove that if 
\[
\zeta_{n}=P_{\mathcal{F}_{Q}(\mathbb{R}\xi_{0})}(l_{i_{1}}\cdots l_{i_{p}}l_{i_{p+1}}^{*}\cdots l_{i_{s}}^{*}r_{j_{1}}\cdots r_{j_{l}}r_{j_{l+1}}^{*}\cdots r_{j_{t}}^{*}\xi_{0}^{\otimes n})
\]
with at least one pair of vectors $\{e_{i_{s'}},e_{j_{t'}}\}\bot\xi_{0}$,
then $\sum_{n\geq0}\|\zeta_{n}\|_{\mathcal{F}_{Q}(H_{\mathbb{R}})}^{2}/\|\xi_{0}^{\otimes n}\|_{\mathcal{F}_{Q}(H_{\mathbb{R}})}^{2}<\infty$.
Take $s''$ to be the largest index in $\{k:e_{i_{k}}\bot\xi_{0}\}$.
Note that we only need to consider the case $s''\geq p+1$ since otherwise
$\zeta_{n}=0$ by orthogonality. Note that it is easy to see that
$\|[l_{i}^{*},r_{j}]|_{H^{\otimes n}}\|\leq q^{n}$ (see e.g. \cite[Lemma 2]{skalskiwang2016mixed}).
Now applying Lemma \ref{lem:conv general case} to the operator $l_{i_{s''}}^{*}\cdots l_{i_{s}}^{*}r_{j_{1}}\cdots r_{j_{l}}r_{j_{l+1}}^{*}\cdots r_{j_{t}}^{*}$,
we see that for all $n$ large enough,
\[
\|\zeta_{n}\|_{\mathcal{F}_{Q}(H_{\mathbb{R}})}\leq Cq^{n}\|\xi_{0}^{\otimes n}\|_{\mathcal{F}_{Q}(H_{\mathbb{R}})},
\]
where $C$ is a constant independent of $n$. Thus $\sum_{n\geq0}\|\zeta_{n}\|_{\mathcal{F}_{Q}(H_{\mathbb{R}})}^{2}/\|\xi_{0}^{\otimes n}\|_{\mathcal{F}_{Q}(H_{\mathbb{R}})}^{2}<\infty$,
as desired. \end{proof}
\begin{rem}
The above arguments can be adapted to the setting of $q$-Araki-Woods
algebras $\Gamma_{q}(H_{\mathbb{R}},U_{t})$ which is recently studied
in \cite{bikrammukherjee17qawfactordim3,bikrammukherjee16qawfactor}.
In particular, this provides a simple proof of the key estimate $\sum_{n\geq0}\|T_{x,y}(H_{n}^{q}(s_{q}(\xi_{0}^{\otimes n}))\Omega)\|_{q}^{2}/\|\xi_{0}^{\otimes n}\|_{q}^{2}<\infty$
in \cite[Lemma 3.1]{bikrammukherjee17qawfactordim3}. According to
the discussion in \cite{bikrammukherjee17qawfactordim3}, the result
is closely related to the factoriality of $\Gamma_{q}(H_{\mathbb{R}},U_{t})$
and yields that $M_{\xi_{0}}$ is a singular masa if $\xi_{0}$ is
invariant under $U_{t}$.
\end{rem}

\subsection*{Acknowledgement}

The author was partially funded by the ERC Advanced Grant on Non-Commutative
Distributions in Free Probability, held by Roland Speicher.

\end{document}